\title{Extremal $K_4$-minor-free graphs without short cycles\thanks{Supported by ERC Advanced Grant "GeoScape", National Research, Development and Innovation Office, NKFIH,
K-131529. and the grant of the 
Hungarian Ministry for Innovation and Technology (Grant Number: NKFIH-1158-6/2019).}}
\author{János Barát}
\institute{János Barát \at
Alfréd Rényi Institute of Mathematics, Eötvös Loránd Research Network, Budapest, Hungary\\
 and
Department of Mathematics, University of Pannonia, Hungary\\
\email{barat@mik.uni-pannon.hu}   }
\date{Received: date / Accepted: date}
\begin{document}

\maketitle

\begin{abstract}
We determine the maximum number of edges in a $K_4$-minor-free $n$-vertex graph of girth $g$, when $g=5$ or $g$ is even.
We argue that there are many different $n$-vertex extremal graphs, if $n$ is even and $g$ is odd.  
\keywords{extremal graph \and $K_4$-minor-free \and girth \and Euler's formula \and graph decomposition}
\end{abstract}

\section{Introduction}

We assume the reader is familiar with basic graph theory, see e.g. \cite{diestel}.
In particular, let $n$  denote the number of vertices,  $e$ the number of edges, $f$ the number of faces of a planar graph.
A face with precisely $i$ edges on its boundary is an $i$-face.
The {\em girth} of a graph is the length of the shortest cycle.  
We denote the distance\footnote{length of the shortest path} between vertices $u$ and $v$ in a graph $H$ by $d_H(u,v)$.  

Let $u$ and $v$ be adjacent vertices. 
The {\em contraction} of an edge $uv$ is the following operation.
We delete the edge $uv$ and merge the vertices $u$ and $v$ into a new vertex $w$ such that 
edges incident to either $u$ or $v$ correspond to edges incident to $w$. 
For our purposes, we only keep single edges if multiple edges are created. 
A graph $H$ is a {\em minor} of a graph $G$ if we can get $H$ from $G$ by contracting and deleting edges.
A graph $G$ contains a complete graph $K_t$ as a minor if $G$ contains $t$ disjoint connected subgraphs such that
any two such subgraphs are connected by an edge of $G$.
A graph $G$ is {\em $K_t$-minor-free} if $G$ does not contain $K_t$ as a minor.

Let $C$ be a cycle of a graph $G$, and let $F$ be a connected component of $G-C$.
The edges between vertices of $F$ and $C$ are the {\em leg}s of $F$.
A {\em bridge} $B$ of $C$ in $G$ is a connected component of $G-C$ together with its legs.
The vertices in $B\cap C$ are the {\em attachment} vertices.

In an {\em extremal} problem, we would like to determine the maximum number of edges of an $n$-vertex graph $G$
that belongs to a graph class ${\mathcal C}$.
The graphs that have this maximum number of edges are the {\em extremal graph}s.
In the classical example, Turán's theorem, there is precisely one extremal graph.
In our focus, the graph class is the $K_t$-minor-free graphs of girth $g$, where both $t$ and $g$ are small.
In some cases there is only one extremal graph and in some other cases there are many.

Our motivation comes from graph colorings.
An easy list-coloring argument is the following. Assume every vertex has a list of colors of size $k$.
Consider a class $\mathcal{C}$ of graphs, where every graph $G$ in the class has minimum degree $\delta(G)\le k-1$.
Now every graph in the class can be $k$-list-colored by the greedy algorithm, always coloring a vertex 
of minimum degree.

Our distant goal is to study $K_6$-minor-free graphs without short cycles.
The preliminary results show that proving an extremal result might be enough to use the simple algorithm given above to 
improve on the present upper bounds.
To find the necessary tools for the problem, we first study $K_4$- and $K_5$-minor-free graphs with a 
condition on the girth. See also the separate paper on $K_5$-minor-free graphs \cite{bj1}.

Any $K_4$-minor-free graph is a subgraph of a $3$-tree. 
That is, a graph built from triangles pasted along edges.
The $K_4$-minor-free graphs are also called {\em series-parallel} graphs.
Observe that any $K_4$-minor-free graph is planar. 



\section{Even girth}

Let $G$ be a $K_4$-minor-free graph and $C$ a cycle of $G$.
Studying planar graphs, the relative position of bridges of a cycle plays a crucial role \cite{b&m}.
Two bridges $B_1$ and $B_2$ of $C$ are {\em crossing}, if there are attachment vertices $u_1,u_2$ of $B_1$ and
$v_1,v_2$ of $B_2$ such that $u_1,v_1,u_2,v_2$ appears on $C$ in this order. 
We use the following simple facts. 

\begin{proposition}
 Let $C$ be any cycle of a $K_4$-minor-free graph $G$.\\ 
 (i) Every bridge of $C$ can have at most $2$ legs.\\
 (ii) The bridges of $C$ are non-crossing.
\end{proposition}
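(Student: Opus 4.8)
The plan is to derive both statements from the one hypothesis we are allowed to use, namely that $G$ has no $K_4$ minor: in each case I would assume the conclusion fails and then exhibit a $K_4$ minor (indeed a subdivision of $K_4$) inside $G$, a contradiction. Two elementary facts are used repeatedly. First, a connected subgraph may be contracted to a single vertex. Second, $C$, being a cycle, is split by any $k$ marked vertices into $k$ internally disjoint arcs, and consecutive arcs are joined by an edge of $C$.

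For (i), let $B$ be a bridge with connected part $F = B - C$, and suppose its legs reach three distinct attachment vertices $a,b,c$ of $C$. (Legs that share an attachment vertex contribute the same point of $C$, so the quantity that matters is the number of attachment vertices.) Since $F$ is connected, contract it to a single vertex $w$; then $w$ is adjacent to each of $a,b,c$. These three vertices split $C$ into three arcs $P_a \ni a$, $P_b \ni b$, $P_c \ni c$; contracting each arc to a single vertex makes $P_a, P_b, P_c$ pairwise adjacent, since consecutive arcs of a cycle are joined by an edge. Together with $w$, which is adjacent to all three arc-vertices, the four branch sets $\{w\}, P_a, P_b, P_c$ are pairwise adjacent and hence form a $K_4$ minor. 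This contradiction shows that a bridge attaches to $C$ at no more than two vertices.

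For (ii), suppose two bridges $B_1, B_2$ cross, with attachment vertices $u_1, u_2$ of $B_1$ and $v_1, v_2$ of $B_2$ occurring as $u_1, v_1, u_2, v_2$ in cyclic order on $C$. Write $F_i = B_i - C$. Because $F_1$ is connected, following the two legs at $u_1, u_2$ and a connecting path inside $F_1$ yields a path $Q_1$ from $u_1$ to $u_2$; define $Q_2 \subseteq B_2$ from $v_1$ to $v_2$ in the same way. The four attachment vertices split $C$ into four arcs, one for each cyclically consecutive pair. Taking $u_1, u_2, v_1, v_2$ as branch vertices, the four arcs realize the edges $u_1v_1, v_1u_2, u_2v_2, v_2u_1$, while $Q_1$ and $Q_2$ realize the two ``diagonal'' edges $u_1u_2$ and $v_1v_2$. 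These six paths are internally disjoint: the arcs are disjoint portions of $C$, and $Q_1, Q_2$ meet $C$ only at their endpoints while using interior vertices of the \emph{distinct} components $F_1, F_2$. Hence they form a subdivision of $K_4$, again contradicting $K_4$-minor-freeness.

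The contraction argument for (i) is routine; the crux is (ii), where the real content is passing from the combinatorial \emph{crossing} condition to genuinely internally disjoint paths. The step I expect to require the most care is the disjointness bookkeeping: one must use the alternation $u_1, v_1, u_2, v_2$ around $C$ (so that the four arcs are non-overlapping and pair the branch vertices correctly) together with the fact that $B_1$ and $B_2$ come from different components of $G - C$ (so the diagonals $Q_1, Q_2$ cannot share an interior vertex). Once these two points are pinned down, each branch vertex has exactly degree three in the constructed configuration and the $K_4$ subdivision is immediate.
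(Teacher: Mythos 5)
The paper offers no proof of this proposition at all---it is introduced with ``We use the following simple facts'' and left as an exercise---so there is nothing to compare your argument against; what you have written is the standard justification, and both halves are essentially correct. In (ii) you have correctly identified and handled the two delicate points: the cyclic alternation $u_1,v_1,u_2,v_2$ makes the four arcs of $C$ internally disjoint and pair the branch vertices as needed, and the fact that $F_1$ and $F_2$ are \emph{distinct} components of $G-C$ keeps the two diagonal paths internally disjoint, so the six paths do form a $K_4$-subdivision. (Note that under this paper's definition a bridge always has a nonempty component $F$, i.e.\ chords of $C$ do not count as bridges, so your paths $Q_i$ are well defined.)

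One caveat on (i): you silently replace ``at most $2$ legs'' by ``at most $2$ attachment vertices'', asserting these amount to the same thing. They do not, and your proof establishes only the latter. The literal statement about legs is in fact false for $K_4$-minor-free graphs: take $C=abcd$ a $4$-cycle, let $F$ be the path $f_1f_2f_3$, and add the legs $f_1a$, $f_3a$, $f_2c$; the resulting graph has girth $4$, is $K_4$-minor-free (after suppressing degree-$2$ vertices one is left with a multigraph on three vertices), yet the bridge $F$ has three legs but only two attachment vertices. So your contraction argument, which needs three \emph{distinct} attachment vertices to produce three pairwise adjacent arcs of $C$, cannot be repaired to bound the number of legs. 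This is really an imprecision in the paper's statement rather than a gap in your proof: everywhere the proposition is invoked later (``the attachments $a_1$ and $a_2$ of $B$'', ``any two bridges must have the same attachment vertices''), what is used is the bound on attachment vertices, which is exactly what you proved. It would be worth stating explicitly that you are proving the attachment-vertex version and that this is the version actually needed.
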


Since $G$ is $K_4$-minor-free, it has a planar drawing.
Consider $G^*$ the dual of any drawing of $G$, where we ignore the unbounded region.
Any cycle in $G^*$ would imply a $K_4$-minor in $G$.
Therefore, $G^*$ must be a tree.

Let $G$ be a $K_4$-minor-free graph of girth $2k$, where $k\ge 2$.

\begin{proposition}
 A $K_4$-minor-free graph $G$ on $n$ vertices of girth $2k$ can have at most $\frac{k}{k-1}(n-2)$ edges, where $k\ge 2$. 
 This bound is tight, if $n=s(k-1)+2$, where $s\ge 2$.
\end{proposition}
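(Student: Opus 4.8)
The plan is to use Euler's formula for planar graphs together with the girth condition, which is the standard technique for such extremal bounds. Since $G$ is $K_4$-minor-free, it is planar, so Euler's formula gives $n - e + f = 2$ (I would first reduce to the connected case; if $G$ is disconnected the bound only improves, so I may assume $G$ is $2$-connected, hence every face is bounded by a cycle). Each face has at least $2k$ edges on its boundary because the girth is $2k$, and each edge lies on the boundary of at most two faces. Double-counting incidences between edges and faces yields $2e \ge 2k\,f$, i.e. $f \le e/k$.

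Substituting $f \le e/k$ into Euler's formula $f = 2 - n + e$ gives $2 - n + e \le e/k$, and rearranging produces $e(1 - 1/k) \le n - 2$, that is $e \le \frac{k}{k-1}(n-2)$. This establishes the upper bound. The main subtlety I expect is justifying that each face boundary really has length at least $2k$: this is immediate for a $2$-connected planar graph (where faces are cycles), but I should handle the unbounded face and low-connectivity cases carefully, noting that cut edges or cut vertices can only decrease the edge count relative to the bound, so the worst case is $2$-connected.

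For the tightness claim when $n = s(k-1)+2$ with $s \ge 2$, the plan is to construct an explicit extremal graph achieving equality. Equality in the argument above forces every face to be a $2k$-cycle and every edge to be shared by exactly two faces. A natural construction is to take $s$ cycles of length $2k$ and glue them in a tree-like (series-parallel) pattern along single edges, so that consecutive $2k$-cycles share exactly one edge. Pasting $s$ such cycles along shared edges in a path or tree structure keeps the graph $K_4$-minor-free (the dual remains a tree, consistent with the earlier observation that $G^*$ is a tree). I would then verify the vertex count: each new $2k$-cycle glued along an edge contributes $2k - 2 = 2(k-1)$ new vertices, so starting from one $2k$-cycle on $2k$ vertices and adding $s-1$ more gives $n = 2k + (s-1)\cdot 2(k-1)$; I would reconcile this with the stated $n = s(k-1)+2$ (the discrepancy suggests the intended gluing is along longer shared paths or that each piece contributes $k-1$ new vertices, so checking the precise gluing that yields exactly $s(k-1)+2$ vertices and $s\cdot k$ edges is the key bookkeeping step).

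The hard part will be pinning down the exact extremal construction so that both the vertex count matches $s(k-1)+2$ and the edge count attains $\frac{k}{k-1}(n-2) = \frac{k}{k-1}\cdot s(k-1) = sk$. I would work backwards from the requirement that equality holds throughout: the graph must have $e = sk$ edges and $f = s$ internal faces, each a $2k$-cycle, arranged so that the dual is a tree on $s$ nodes. Concretely I suspect one pastes the $2k$-cycles along paths of length $k-1$ (not single edges), so that each added cycle shares $k$ edges and $k-1$ internal vertices with the existing structure, contributing exactly $k$ new edges and $k-1$ new vertices per piece — this gives $n = 2 + s(k-1)$ and $e = sk$ as required, with every bounded face a $2k$-cycle.
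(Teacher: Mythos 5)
Your proof is correct and follows essentially the same route as the paper: the upper bound is the identical Euler-formula/girth double count (your care about face boundaries in the non-$2$-connected case is a reasonable extra precaution), and your final construction --- $2k$-cycles sharing $k$-edge paths so that each piece adds $k$ edges and $k-1$ vertices --- is exactly the paper's generalized theta graph, namely two vertices $u,v$ joined by $s$ internally disjoint paths of length $k$. Just tidy the bookkeeping in your last paragraph: the shared paths have $k$ edges (not length $k-1$), there are $s-1$ bounded faces (not $s$), and you should explicitly check that every cycle in the theta graph is a union of two of the $s$ paths, hence of length exactly $2k$, which gives both the girth and (via the two-legged bridges) the $K_4$-minor-freeness.
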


\begin{proof}
 The graph $G$ satisfies Euler's formula: $e+2=f+n$.
 Also, by the girth condition $2e=\sum i f_i\ge 2kf$, where $f_i$ denotes the number of $i$-faces of $G$.
 Therefore, $2ke+4k=2kf+2kn\le 2e+2kn$, which implies $e\le \frac{2kn-4k}{2k-2}=\frac{k}{k-1}(n-2)$.
 
 Next, we construct a $K_4$-minor-free graph $G$ with $n=s(k-1)+2$ vertices and $e=ks$ edges and of girth $2k$,
 where $s\ge 2$.
 This gives us a sharp example as $\displaystyle\frac{k\left(s(k-1)+2\right)-2k}{k-1}=ks$.
 The graph $G$ is the following: let $u$ and $v$ be two vertices and connect them by $s$ edge-disjoint paths 
 with $k$ edges.
 Now every cycle must go through $u$ and $v$, hence the girth is indeed $2k$.
 The graph $G$ is also $K_4$-minor-free since removing any cycle leaves only bridges with 2 legs.
 
 We show that actually these are all the extremal examples, when $n=s(k-1)+2$.
 Notice that the application of Euler's formula shows us that in any drawing of an extremal graph $G$
 all faces must have $2k$ edges.
 Let $C$ be a face of length $2k$ in $G$.
 Consider all bridges of $C$ in $G$.
 Any such bridge $B$ can have at most two legs (actually must have 2 legs).
 Also bridges must be non-crossing.
 Finally, any two bridges must have the same attachment vertices.
 Otherwise we get a face with length greater than $2k$.
 Now we know that there are two vertices $u$ and $v$ of $C$ and all bridges are attached to these two vertices in $G$.
 
 Taking into account that every face has length $2k$, we get that a bridge $B$ cannot contain a path longer than the path
 between $u$ and $v$ in $C$. This implies that indeed $B$ cannot be other than a copy of the path between $u$ and $v$.
$\Box$ \end{proof}

\section{Girth 5}

In this section $G$ denotes a $K_4$-minor-free graph of girth 5.
We start with an important observation.

\begin{lemma} \label{l2}
 We may assume that an extremal graph $G$ is $2$-connected.
\end{lemma}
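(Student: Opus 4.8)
The plan is to show that an extremal graph can be neither disconnected nor contain a cut vertex, so that from this point on we may restrict attention to $2$-connected graphs. Throughout I would use the standard fact that a graph is $K_4$-minor-free exactly when each of its blocks is: since $K_4$ is $2$-connected, any $K_4$-minor must be confined to a single block of the block-cut tree. For connectedness, suppose an extremal $G$ had two components, and add an edge $uv$ joining them. This edge is a bridge, hence lies on no cycle, so the girth stays $5$; and as the new edge merely creates an extra $K_2$-block while leaving every other block intact, the enlarged graph is still $K_4$-minor-free. We have produced a graph on the same $n$ vertices with one more edge, contradicting extremality. Hence an extremal $G$ is connected.

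For the second step I would decompose a connected $G$ into its blocks $B_1,\dots,B_m$, each with $|B_i|$ vertices and $e(B_i)$ edges, meeting at cut vertices. Then $e=\sum_i e(B_i)$, and building $G$ up one block at a time along the block-cut tree (each new block sharing exactly one already-present vertex) gives the identity $\sum_i(|B_i|-1)=n-1$. Every $B_i$ is $K_4$-minor-free of girth at least $5$, the degenerate case being a bridge $B_i=K_2$ with $e(B_i)=1$. For a $2$-connected block the Euler-formula argument of the previous section applies verbatim: every face is then bounded by a cycle of length at least $5$, so $2e(B_i)\ge 5f$, and Euler's formula yields $e(B_i)\le\frac{5}{3}(|B_i|-2)$.

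It remains to feed these per-block estimates into the vertex identity and optimize over the block sizes and types. Writing $p$ for the number of bridges and $q$ for the number of $2$-connected blocks, a short computation gives $e\le\frac{5}{3}(n-1)-\frac{2}{3}p-\frac{5}{3}q$, so maximizing the edge count amounts to minimizing the ``deficit'' $\frac{2}{3}p+\frac{5}{3}q$ over the configurations that assemble into a connected graph on $n$ vertices. A single $2$-connected block, i.e.\ $(p,q)=(0,1)$, realizes deficit $\frac{5}{3}$ and the bound $e\le\frac{5}{3}(n-2)$, whereas any genuinely multi-block configuration raises the deficit and strictly lowers the admissible number of edges; so for $n$ in the relevant range the extremal graph must consist of a single $2$-connected block.

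The step I expect to require the most care is the bookkeeping forced by bridges and tiny blocks, whose edge counts do not obey the naive bound $\frac{5}{3}(|B_i|-2)$ and which must be absorbed by treating $K_2$ separately in the optimization above, so that they only ever worsen the deficit. A related subtlety is the small-$n$ regime: for very small $n$ no $2$-connected $K_4$-minor-free graph of girth $5$ exists at all (the smallest such graph is $C_5$), so trees may be forced to be extremal there, and the statement is genuinely about $n$ large enough for the single-block optimum to dominate. Finally, one must confirm that the argument is not circular: the per-block estimate $e(B_i)\le\frac{5}{3}(|B_i|-2)$ rests only on Euler's formula for $2$-connected plane graphs and is therefore available independently of this lemma.
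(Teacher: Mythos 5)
Your first step (ruling out disconnected extremal graphs by adding a bridge between two components) is correct, and it is a point the paper's own proof does not even bother to spell out. The trouble is in your second step, and it is a genuine gap, not a bookkeeping issue. From $e\le\frac{5}{3}(n-1)-\frac{2}{3}p-\frac{5}{3}q$ you conclude that any configuration with more than one block cannot be extremal because its \emph{upper bound} is smaller than the single-block \emph{upper bound} $\frac{5}{3}(n-2)$. Comparing upper bounds across classes proves nothing about where the true maximum sits unless the winning bound is (nearly) attained, and for girth $5$ it is not: the actual maximum for $2$-connected $K_4$-minor-free graphs of girth $5$ is $\lceil\frac{3}{2}n-3\rceil$, far below $\frac{5}{3}(n-2)$. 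Concretely, the cheapest cutvertex configuration --- one bridge plus one $2$-connected block, i.e.\ $(p,q)=(1,1)$ --- has deficit $\frac{7}{3}$, so your bound allows it $\frac{5}{3}(n-1)-\frac{7}{3}=\frac{5}{3}n-4$ edges, which \emph{exceeds} $\lceil\frac{3}{2}n-3\rceil$ as soon as $n>6$. For example at $n=12$ your inequality permits a cutvertex graph with $16$ edges, while the true $2$-connected maximum is $15$; your argument therefore cannot exclude that some graph with a cutvertex beats every $2$-connected graph, which is exactly what the lemma must exclude. The deficit-minimization scheme works only when the per-block Euler bound is essentially tight --- which is the even-girth situation of Section 2 --- and the failure of that tightness at girth $5$ is precisely the phenomenon the paper's Section 3 is about. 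Nor can you repair it by substituting the true per-block bound $\lceil\frac{3}{2}|B_i|-3\rceil$: as you yourself observe, that bound is Theorem~\ref{main}, whose proof relies on this lemma, so the substitution is circular.

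The paper avoids counting altogether and uses local surgery instead: if $x$ is a cutvertex with $G_1\cap G_2=x$, pick neighbors $v_1\in G_1$ and $v_2\in G_2$ of $x$, delete the edge $v_2x$ and add the edge $v_1v_2$. This keeps $n$, $e$, and the girth, and a short bridge argument shows it keeps $K_4$-minor-freeness; iterating removes all cutvertices while preserving the edge count, so among the extremal graphs there is a $2$-connected one. That transformation argument is what makes the "we may assume" legitimate without any knowledge of the extremal function; your approach needs that knowledge and therefore cannot get off the ground. If you want a counting-free fix in your own framework, you would have to produce, for every graph with a cutvertex, an explicit graph on the same vertex set with at least as many edges and no cutvertex --- which is exactly the paper's swap.
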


\begin{proof}
 Suppose that a $K_4$-minor-free graph $G'$ of girth $5$ has a cutvertex $x$ such that
 $G_1\cup G_2=G'$ and $G_1\cap G_2=x$.
 Let $v_1$ be a neighbor of $x$ in $G_1$ and $v_2$ a neighbor in $G_2$.
 We modify the graph $G'$ by deleting the edge $v_2x$ and adding the edge $v_1v_2$.
 Notice the girth condition is still satisfied.
 How could there be a $K_4$-minor now? 
 Only if there is a cycle $C$ through $x$ and $v_1v_2$ and a bridge $B$ of $C$ with at least three legs. 
 However, the attachment vertices of $B$ must be entirely in $G_1$ or $G_2$.
 If they are in $G_1$, then we contract the path of $C$ between $x$ and $v_1$ through $v_2$ to find a $K_4$-minor 
 already in $G_1$.
 If the attachment vertices are in $G_2$, then we contract the path of $C$ between $x$ and $v_2$ through $v_1$ 
 to find a $K_4$-minor already in $G_2$.
 It shows that the modified graph is still $K_4$-minor-free.
 Repeating the above process we can transform the original graph to one without cutvertices, but still having the
 same parameters.  
$\Box$ \end{proof}

In what follows, we assume $G$ is $2$-connected.
As it often happens, the odd girth case is different and more complicated than even girth.
We can use Euler's formula as before to set an upper bound. 
However, there is no matching lower bound construction.


\begin{theorem}\label{main}
 Any $n$-vertex $K_4$-minor-free graph $G$ of girth $5$ can have at most $\lceil\frac{3}{2}n-3\rceil$ edges, where $n\ge 5$.
\end{theorem}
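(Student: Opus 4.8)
The plan is to prove the bound by induction on $n$, exploiting the tree structure of the weak dual noted above. By Lemma~\ref{l2} I may assume $G$ is $2$-connected, so every face is bounded by a cycle and the outer boundary is a cycle; in fact I would prove the inequality for the whole class of $2$-connected $K_4$-minor-free graphs of girth at least $5$, since the reduction I use stays inside this class. The base case is when $G$ has a single bounded face, i.e. $G$ is a cycle $C_\ell$ with $\ell=n\ge 5$, where $e=n\le\lceil\tfrac32 n-3\rceil$ is immediate (with equality only at $\ell=5,6$). A naive deletion of one degree-$2$ vertex removes two edges while the bound relaxes by only $\tfrac32$, so the key idea is to remove a whole path at once.

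For the inductive step assume $G$ has at least two bounded faces. Since $G^*$ (the bounded faces, adjacent when they share an edge) is a tree, I would pick a leaf face $F$ with unique neighbour $F'$. First I would show that the edges of $F$ shared with $F'$ form a single path $Q$ of length $q$, and the remaining edges of $F$, all on the outer face, form a single path $R$ of length $r$. The single-path claim is a planarity argument: if $F'$ met $\partial F$ along two separate arcs, then $F'$ and the outer face would touch $\partial F$ in interleaving arcs, which the Jordan curve theorem forbids (equivalently, this is ruled out by the facts that bridges of a cycle have at most two legs and are non-crossing). I would then note that every interior vertex of $Q$ lies only on $F$ and $F'$, and every interior vertex of $R$ lies only on $F$ and the outer face, so each has degree exactly $2$: a third edge would split $F$, $F'$, or the outer region, contradicting that two consecutive boundary edges bound a single face.

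Because $F$ is a face of a girth-$5$ graph, $q+r=\mathrm{len}(F)\ge 5$, hence $\max(q,r)\ge 3$. If $r\ge 3$ I delete the $r-1$ interior vertices of $R$ (merging $F$ into the outer face); if instead $q\ge 3$ I delete the $q-1$ interior vertices of $Q$ (merging $F$ and $F'$). Either way I remove a path with $\ell\ge 3$ edges and $\ell-1$ interior vertices, all of degree $2$, leaving a graph $G'$ that is again $2$-connected (the two poles keep degree at least $2$, and the two remaining arcs supply disjoint connections between them), $K_4$-minor-free, and of girth at least $5$ (deleting vertices never shortens a cycle), with $n'=n-(\ell-1)$ and $e'=e-\ell$. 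By induction $e'\le\lceil\tfrac32 n'-3\rceil$. Writing $x=\tfrac32 n-3$, the elementary estimate $\lceil x-\tfrac32(\ell-1)\rceil+\ell\le\lceil x\rceil$, valid for all real $x$ and all integers $\ell\ge 3$ (from $\lceil a-b\rceil\le\lceil a\rceil-\lfloor b\rfloor$ together with $\lfloor\tfrac32(\ell-1)\rfloor\ge\ell$), yields $e=e'+\ell\le\lceil\tfrac32 n-3\rceil$, completing the induction.

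The hard part is the structural lemma of the second paragraph: establishing that a leaf face contributes exactly one shared path and one private path, each with degree-$2$ interior. This is precisely where the full strength of $K_4$-minor-freeness enters, through the tree shape of $G^*$ and the non-crossing bridges of the Proposition. Once it is in place, the girth inequality $q+r\ge 5$ guarantees a removable path of length at least $3$, and the only remaining subtlety is the parity bookkeeping concealed in the ceiling, which is absorbed by the displayed estimate; I would double-check the two boundary face-lengths produced by a merge are automatically at least $5$, since the relevant cycle already exists in $G$.
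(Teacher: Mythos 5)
Your proposal is correct, but it follows a genuinely different route from the paper's own proof. The paper also inducts on $n$ after the reduction of Lemma~\ref{l2}, but it splits $G$ along an arbitrary $2$-cut $\{u,v\}$ into $G_1\cup G_2$ and then needs a preliminary case $d_{G_1}(u,v)=1$, $d_{G_2}(u,v)\ge 4$ plus four parity cases for $(n_1,n_2)$, each time gluing an extra edge or a $2$-edge path onto $G_2$ so that the induction hypothesis can be applied to both pieces. Your argument amounts to always choosing the $2$-cut $\{a,b\}$ so that one side is a bare path: the leaf of the weak dual supplies an arc of degree-$2$ vertices on the boundary of a face, the girth forces one of the two arcs $Q,R$ to have length $\ell\ge 3$, and then the single estimate $\lceil x-\frac{3}{2}(\ell-1)\rceil+\ell\le\lceil x\rceil$ (which I checked: it follows from $\lceil a-b\rceil\le\lceil a\rceil-\lfloor b\rfloor$ and $\lfloor\frac{3}{2}(\ell-1)\rfloor\ge\ell$ for $\ell\ge 3$) replaces all of the paper's parity bookkeeping. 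A further genuine improvement is that you run the induction over all $2$-connected $K_4$-minor-free graphs of girth \emph{at least} $5$; the paper's auxiliary graphs $G_1$ and $G_2^+$ need not have girth exactly $5$, so your formulation of the induction hypothesis is the cleaner one. The trade-off is that your route leans on the plane embedding (tree-shaped weak dual, facial structure), while the paper's stays at the level of $2$-cuts. Two spots in your sketch need tightening, though both claims are true: (i) the existence of a leaf face requires observing that in a $2$-connected graph with at least two bounded faces, no bounded face has all its edges on the outer cycle (otherwise $G$ would be a cycle), so every component of the acyclic weak dual has an edge and hence a leaf; (ii) your one-line justification of $2$-connectivity of $G'$ is not sufficient as stated, since deleting the interior of an ear from a $2$-connected graph can in general create a cutvertex --- here it cannot, because any cutvertex $x$ of $G'$ would have to separate $a$ from $b$ and hence be an interior vertex of the surviving arc ($Q$ or $R$), and the complementary arc of the cycle bounding $F'$ (respectively of the outer face) is a detour from $a$ to $b$ avoiding both $x$ and the deleted vertices, since the deleted degree-$2$ vertices do not lie on that cycle. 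With those two points filled in, your induction is complete and yields the theorem.
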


\begin{proof}
 We prove the statement by induction on the number of vertices $n$.
 For the base of induction, we notice that on 5 vertices, $G$ can have at most $5$ edges.
 We get $5\le \lceil\frac{3}{2}5-3\rceil=5$ and the statement holds.
 On 6 vertices, the 6-cycle is the only possible graph and the claim holds again.
 Finally on 7 vertices, there are two possible graphs and one extremal example.
 The inequality holds for these too.
 
 For the induction step, let $G$ be a $2$-connected extremal $K_4$-minor-free graph of girth $5$.
 Let $\{u,v\}$ be a $2$-cut\footnote{There is always a $2$-cut since $G$ is the subgraph of a $3$-tree.} 
 such that $G_1\cup G_2=G$ and $G_1\cap G_2=\{u,v\}$.
 Let $n_i$ and $m_i$ denote the number of vertices in $G_i$ for $i\in\{1,2\}$.
 Observe here that we can always select $u$ and $v$ such that $G_1$ has at least 4 vertices\footnote{That is, the graph $G_1$ is 
 larger than a path with 2 edges.}, when $n\ge 8$.
 
 First suppose that $d_{G_1}(u,v)=1$ and $d_{G_2}(u,v)\ge 4$.
 We add an edge between $u$ and $v$ in $G_2$ to create $G_2^+$.
 By the induction hypothesis for $G_1$ and $G_2^+$, we deduce the following:\\
 $m_1\le \lceil\frac{3}{2}n_1-3\rceil\le\frac{3}{2}n_1-\frac{5}{2}$.\\
 $m_2+1\le \lceil\frac{3}{2}n_2-3\rceil\le\frac{3}{2}n_2-\frac{5}{2}$.
  
 Since $m=m_1+m_2$ and $n=n_1+n_2-2$, we get:\\
 $m=m_1+m_2\le \frac{3}{2}n_1-\frac{5}{2}+\frac{3}{2}n_2-\frac{7}{2}=\frac{3}{2}(n_1+n_2)-6=\frac{3}{2}n-3\le
 \lceil\frac{3}{2}n-3\rceil$.

 \medskip
 
 Now suppose that $d_{G_1}(u,v)\ge 2$ and $d_{G_2}(u,v)\ge 3$.
 
 \underline{Case 1.} $n_1$ and $n_2$ are even.
 Now we directly apply the induction hypothesis on  $G_1$ and $G_2$:\\
 $m_1\le \lceil\frac{3}{2}n_1-3\rceil=\frac{3}{2}n_1-3$.
 
 \smallskip
 
 \noindent $m_2\le \lceil\frac{3}{2}n_2-3\rceil=\frac{3}{2}n_2-3$.
 
 Since $m=m_1+m_2$ and $n=n_1+n_2-2$, we get:\\
 $m=m_1+m_2\le \frac{3}{2}n_1-3+\frac{3}{2}n_2-3=\frac{3}{2}n-3=\lceil\frac{3}{2}n-3\rceil$, since $n$ is even.

 \underline{Case 2.} $n_1$ and $n_2$ are odd.
 Now we add a path with 2 edges between $u$ and $v$ in $G_2$ to create $G_2^+$.
 By the induction hypothesis for $G_1$ and $G_2^+$, we deduce the following:\\
 $m_1\le \lceil\frac{3}{2}n_1-3\rceil=\frac{3}{2}n_1-\frac{5}{2}$.\\
 $m_2+2\le \lceil\frac{3}{2}(n_2+1)-3\rceil=\frac{3}{2}(n_2+1)-3$.
 
 Since $m=m_1+m_2$ and $n=n_1+n_2-2$, we get:\\
 $m=m_1+m_2\le \frac{3}{2}n_1-\frac{5}{2}+\frac{3}{2}(n_2+1)-5=\frac{3}{2}(n_1+n_2)-6=\frac{3}{2}n-3\le
 \lceil\frac{3}{2}n-3\rceil$.
 
 \underline{Case 3.} $n_1$ is even and $n_2$ is odd.
 Again we add a path with 2 edges between $u$ and $v$ in $G_2$ to create $G_2^+$.
 By the induction hypothesis for $G_1$ and $G_2^+$, we deduce the following:\\
 $m_1\le \lceil\frac{3}{2}n_1-3\rceil=\frac{3}{2}n_1-3$.\\
 $m_2+2\le \lceil\frac{3}{2}(n_2+1)-3\rceil=\frac{3}{2}(n_2+1)-3$.
 
 Since $m=m_1+m_2$ and $n=n_1+n_2-2$, we get:\\
 $m=m_1+m_2\le \frac{3}{2}n_1-3+\frac{3}{2}(n_2+1)-5=\frac{3}{2}(n_1+n_2)-8+\frac{3}{2}=\frac{3}{2}n-\frac{7}{2}<
 \frac{3}{2}n-3
 \le\lceil\frac{3}{2}n-3\rceil$.
 
 \underline{Case 4.} $n_1$ is odd and $n_2$ is even.
 We add a path with 2 edges between $u$ and $v$ in $G_2$ to create $G_2^+$.
 By the induction hypothesis for $G_1$ and $G_2^+$, we deduce the following:\\
 $m_1\le \lceil\frac{3}{2}n_1-3\rceil=\frac{3}{2}n_1-\frac{5}{2}$.\\
 $m_2+2\le \lceil\frac{3}{2}(n_2+1)-3\rceil=\frac{3}{2}(n_2+1)-\frac{5}{2}$.
 
 Since $m=m_1+m_2$ and $n=n_1+n_2-2$, we get:\\
 $m=m_1+m_2\le \frac{3}{2}n_1-\frac{5}{2}+\frac{3}{2}(n_2+1)-\frac{5}{2}-2=\frac{3}{2}(n_1+n_2)-7+\frac{3}{2}=
 \frac{3}{2}n-4+\frac{3}{2}=\frac{3}{2}n-\frac{5}{2}=
 \lceil\frac{3}{2}n-3\rceil$, since $n$ is odd.
$\Box$ \end{proof}

\section{Lower bound construction}

Next, we show a $K_4$-minor-free graph $G_s$ with $n=2s+1$ vertices and $e=3s-1=\lceil\frac{3}{2}n-3\rceil$ edges and of girth $5$ for
every $s\ge 2$. 
This shows that Theorem~\ref{main} is tight for every odd $n$, where $n\ge 5$.
The construction is similar to the one for even girth.
Let $u$ and $v$ be two vertices and connect them by $s-1$ edge-disjoint paths 
 with $3$ edges and one path with $2$ edges. See Fig~\ref{constr1}.
 
\begin{figure}[ht]
  \begin{center}
    \includegraphics[width=0.2\textwidth]{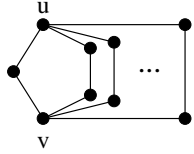}
    \caption{\label{constr1}Extremal $K_4$-minor-free graphs of girth $5$.}
  \end{center}
\end{figure}

If we subdivide an edge\footnote{replace an edge by a $2$-edge path} of $G_{s-1}$ 
not in the $2$-edge path between $u$ and $v$, then we get a $K_4$-minor-free graph of girth 5 on $n=2s$ vertices and 
$e=3s-3=\frac{3}{2}n-3$ edges.
Next, we indicate that if the number of vertices is even, there are many other graphs with the same parameters.

We determine all $K_4$-minor-free graphs on 10 vertices of girth 5 and 12 edges.
Let us consider the planar drawings of these graphs.
We distinguish them by the length $l$ of their outer cycle $C$.

If $l=10$, then there must be two more edges connecting vertices of $C$.
Since $G$ is $K_4$-minor-free, these two edges form non-crossing bridges.
So there are 3 faces inside of $C$.
Now two edges can make the total length of these 3 faces to $14$.
There must be a face with at most 4 edges, a contradiction.

If $l=9$, then there must be a vertex $x$ in $C$ such that $x$ is adjacent to $y$ in $C$ forming a $5$-face and 
$x$ is connected to $z$ in $C$ by a subdivided edge forming two more $5$-faces.
This gives us the first example: $H_1$, see Fig~\ref{ext10}.

If $l=8$, then the remaining 2 vertices and 4 edges can form an edge and a 3-edge-path or two 2-edge-paths
connecting vertices of $C$.
The total length of the faces inside $C$ must be 16.
If an edge is connecting two vertices $x$ and $y$ of $C$, it must form two 5-cycles with $C$.
We may assume that the remaining 3-path is also incident to $x$.
There are two different ways of connecting $x$ to $z$ in $C$
by a $3$-path. 
In the drawing, either the length of the faces are 5,6,5 or 6,5,5.
Now we have to check whether these examples are isomorhic to $H_1$.
We find that indeed the 5,6,5 example is $H_1$, so we found a new graph $H_2$, see Fig~\ref{ext10}.

If there are two $2$-edge-paths, then they either have a vertex $x$ in common or not.
If they are independent, then they form faces of length 5,6,5 inside $C$.
This is a new graph $H_3$ by its degree sequence.
If $x$ is connected to $y$ and $z$ in $C$ by two $2$-paths, then the faces have length either 
5,6,5 or 5,5,6 in order. These are also new examples: $H_4$ and $H_5$, see Fig~\ref{ext10}.

If $l=7$, then there are 3 remaining vertices and 5 edges, so the total length of the faces inside $C$ must be 17.
Any chord of $C$ would create a short cycle and a 4-path would create a long cycle tracking us back to a previous case. 
So there can be a 2-path $Q$ and a 3-path $P$ connecting vertices of $C$.
The 2-path must connect two vertices at distance 3 on $C$ creating a 5-cycle and a 6-cycle.
If the 3-path $P$ has attachments different from the 2-path, then $P$ connects two vertices at distance 2 on the 
created 6-cycle. However, this is isomorphic to $H_3$.
Otherwise a common vertex $x$ is incident to both $Q$ and $P$.
Principally there are four possibilities for the other endvertex of $P$.

1. The other endvertex is on the 5-cycle at distance 2 from $x$.
This creates faces of length 5,6,6 inside $C$. 
However, this is isomorphic to $H_5$.

2. The other endvertex coincides with the other endvertex of $Q$.
Now there are two vertices of degree 4, and the length of the faces inside $C$ are 5,5,7. 
This is a new example $H_6$, see Fig~\ref{ext10}.

3. The other endvertex of $P$ is at distance 3 from $x$ on $C$ and is different from the other endvertex of $Q$.
This creates faces of length 5,6,6 inside $C$.
This is again a new example $H_7$, see Fig~\ref{ext10}.

4. The other endvertex of $P$ is at distance 2 from $x$ on $C$.
This creates faces of length 5,7,5 inside $C$.
However, this example is isomorphic to $H_4$.

If $l=6$, then there are 4 remaining vertices and 6 edges. 
If they form a path with at least 4 edges, then a cycle of length at least 7 is created.
That was covered in the previous cases.
Therefore the remaining bridges must be two 3-paths.
If we connect two vertices at distance 2 on $C$ by a 3-path, then we create a cycle of length 7.
Therefore the 3-paths must connect vertices at distance 3. 
Since the bridges are non-crossing, the attachments must be the same for the two 3-paths.
This is graph $H_8$, see Fig~\ref{ext10}.

\begin{figure}[ht]
  \begin{center}
    \includegraphics[width=0.7\textwidth]{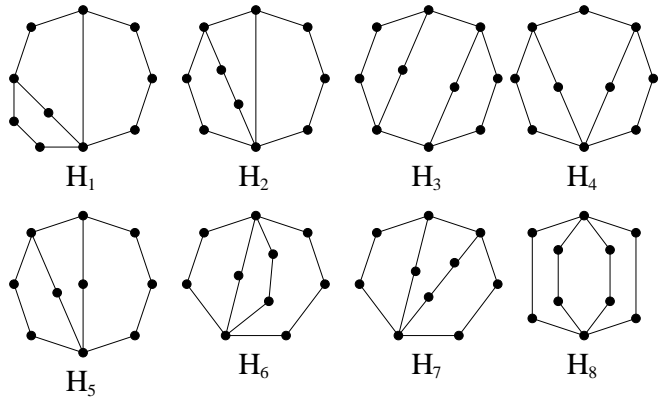}
    \caption{\label{ext10}Extremal $K_4$-minor-free graphs of girth $5$ on $10$ vertices.}
  \end{center}
\end{figure}

We emphasize again, that thereby we proved that Theorem~\ref{main} is tight for $n=10$, 
and we have determined the complete list of extremal graphs as shown in Fig~\ref{ext10}.

\begin{acknowledgements}
 We thank an anonymous referee of \cite{bj1}, who suggested this project.
 We thank another anonymous referee for suggesting structural changes that improved the presentation.
\end{acknowledgements}

\end{document}